  \theoremstyle{plain}
    \newtheorem{thm}{Theorem}[section]
    \newtheorem{prop}[thm]{Proposition}
   \newtheorem{lemma}[thm]{Lemma}
    \newtheorem{subsec}[thm]{}
\theoremstyle{definition}
\theoremstyle{remark}
     \newtheorem{remark}[thm]{Remark}
    \newtheorem*{ack}{Acknowledgements}
\newcommand{\Z}{\mathbb{Z}}
\newcommand{\R}{\mathbb{R}}
\newcommand{\Q}{\mathbb{Q}}
\newcommand{\C}{\mathbb{C}}
\newcommand{\orgr}{\widetilde{I}_{2n,k}}
\newcommand{\omgr}{\widetilde{I}_{2m,l}}
\title{}
\author{}
\date{}
\begin{document}
\title{Degrees of Maps between Isotropic Grassmann Manifolds}
\author{Samik Basu}
\email{samik.basu2@gmail.com; samik@rkmvu.ac.in}
\address{Department of Mathematics,
 Vivekananda University,
 Belur, Howrah 711202,
West Bengal, India.}

\author{Swagata Sarkar}
\email{swagatasar@gmail.com}
\address{Department of Mathematics, 
 UM-DAE Centre for Excellence in Basic Sciences, 
University of Mumbai, Vidyanagari Campus, Kalina,
Mumbai - 400098, India.}

\date{\today}
% \classification{Primary: 55N22, 55N91, ;\ Secondary: 55P91, 55Q91,55M35}
\subjclass[2010]{Primary: 55M25, 14M17;\ Secondary: 14M15, 57T15, 55R40}
\keywords{Isotropic Grassmann Manifolds, Brouwer Degree, Characteristic Classes}

\thispagestyle{empty}

\begin{abstract}

Let $\orgr$ denote the space of $k$-dimensional, oriented isotropic subspaces of $\R^{2n}$, called the oriented isotropic Grassmannian. Let $f \colon \orgr \rightarrow \omgr $ be a map between two oriented isotropic Grassmannians of the same dimension, where $k,l \geq 2$. We show that either  $(n,k) = (m,l)$ or $\deg{f} = 0$. Let $\R\widetilde{G}_{m,l}$ denote the oriented real Grassmann manifold. For $k,l \geq 2$ and $\dim{\orgr} = \dim{\R\widetilde{G}_{m,l}}$, we also show that the degree of  maps $g \colon \R \widetilde{G}_{m,l} \rightarrow \orgr $ and $h \colon \orgr \rightarrow \R \widetilde{G}_{m,l}$ must be zero.
\end{abstract}

\maketitle

\section{Introduction}
It has been proved in \cite{rs} that maps between two different oriented real Grassmann manifolds of the same dimension cannot have non-zero degree, provided the target space is not a sphere. A similar result is obtained for complex Grassmann manifolds in \cite{psr}, when the map is a morphism of projective varieties. For arbitrary maps, this result has been verified for the complex Grassmann manifolds for many cases in \cite{rs} and \cite{ss}.\\

In this paper we consider the analogous question for the space $\orgr$ of oriented $k$-dimensional isotropic subspaces of a symplectic vector space of dimension $2n$. The oriented isotropic Grassmannian was considered in \cite{mor-nig} and its cohomology was computed with real coefficients. Their method involves identifying $\orgr$ as a homogeneous space $\orgr \simeq U(n)/(SO(k)\times U(n-k))$. One may similarly consider $I_{2n,k}$, the isotropic Grassmannian of $k$-dimensional isotropic subspaces of a symplectic $2n$ dimensional vector space, which is $\simeq U(n)/(O(k)\times U(n-k))$. It turns out that the isotropic Grassmannian is orientable if and only if $k$ is odd (\cite{mik}). In this paper we consider maps between oriented isotropic Grassmannians of the same dimension and prove 
\begin{thm}
Let $n,k,m,l$ be integers such that $2 \leq l \leq m $  and $\dim{\orgr} = \dim{\omgr}$. Let $f \colon \orgr \rightarrow \omgr $. Then either $(n,k) = (m,l)$ or $\deg{f} = 0$.
\end{thm}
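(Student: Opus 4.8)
\emph{Proof strategy.}
The plan is to pass to rational cohomology, use that a non-zero degree forces an injection of cohomology rings, obstruct that injection from the explicit structure of $H^{*}(\orgr;\Q)$, and then treat by hand the few pairs for which even the rational cohomology ring is not a strong enough invariant.

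Write $N=\dim\orgr=\dim\omgr$. Both spaces are closed and orientable, so $H^{N}(-;\Q)\cong\Q$ and $f^{*}$ is multiplication by $\deg f$ there. If $\deg f\neq0$ and $0\neq\alpha\in H^{i}(\omgr;\Q)$, then by Poincar\'e duality there is $\beta$ with $\alpha\beta\neq0$, so $f^{*}(\alpha\beta)=\deg f\cdot(\alpha\beta)\neq0$ and hence $f^{*}\alpha\neq0$. Thus $f^{*}\colon H^{*}(\omgr;\Q)\to H^{*}(\orgr;\Q)$ is an injective graded ring homomorphism that carries the fundamental class to $\deg f$ times that of $\orgr$; in particular $\dim_{\Q}H^{i}(\omgr;\Q)\le\dim_{\Q}H^{i}(\orgr;\Q)$ for all $i$. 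So it is enough to rule such a map out whenever $(n,k)\neq(m,l)$.

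I would then compute $H^{*}(\orgr;\Q)$ from $\orgr\simeq U(n)/(SO(k)\times U(n-k))$ and the Borel fibration $\orgr\to B(SO(k)\times U(n-k))\to BU(n)$ (essentially the computation of \cite{mor-nig}): one gets $H^{*}(\orgr;\Q)\cong A_{n,k}\otimes\Lambda(z_{1},\dots,z_{\lceil k/2\rceil})$, where the $z_{j}$ are exterior generators in odd degrees and $A_{n,k}$ is a Poincar\'e duality algebra generated by the rational Pontryagin classes $p_{i}(\eta)$ of the tautological isotropic bundle $\eta$ — together, when $k$ is even, with the Euler class $e(\eta)\in H^{k}$, where $e(\eta)^{2}=p_{k/2}(\eta)$ — modulo the relations coming from $\eta\otimes_{\R}\C\oplus\zeta\cong\underline{\C^{\,n}}$, i.e. $c(\zeta)=c(\eta\otimes_{\R}\C)^{-1}$ with $c_{j}(\zeta)=0$ for $j>n-k$. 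From this I would record the Poincar\'e polynomial $P_{n,k}(t)$ and the low-degree ring structure (for instance $H^{1}(\orgr;\Q)\neq0$ iff $n=k$, and $H^{2}(\orgr;\Q)\neq0$ iff $k=2$). For most pairs $(n,k)\neq(m,l)$ of equal dimension with $k,l\ge2$ the inequalities $\dim H^{i}(\omgr;\Q)\le\dim H^{i}(\orgr;\Q)$ already fail in some degree — one uses $b_{i}=b_{N-i}$ together with the low-degree values, and there are very few admissible pairs in each dimension — so no injection exists and $\deg f=0$; for the pairs with equal Betti numbers but non-isomorphic rings I would run the characteristic-class argument of \cite{rs} on the even part $A_{n,k}$, identifying the images $f^{*}p_{i}(\eta')$ and $f^{*}e(\eta')$ up to scalars via injectivity, imposing the pulled-back relations $c_{j}(\zeta')=0$, and using $f^{*}(\text{top class of }\omgr)=\deg f\cdot(\text{top class of }\orgr)$ — the top class being the product of all the exterior generators with the socle generator of the even part — to force $(n,k)=(m,l)$.

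The real difficulty is that there exist pairs for which none of this is available: for example $\orgr=\widetilde{I}_{10,3}$ and $\omgr=\widetilde{I}_{10,4}$ are both of dimension $18$ and $H^{*}(-;\Q)\cong\Q[x]/(x^{2})\otimes\Lambda(y,z)$ with $|x|=4$, $|y|=5$, $|z|=9$ for both, so the cohomology rings are isomorphic and the numerology of \cite{rs} turns out to be consistent with a non-zero degree. For these exceptional pairs I would have to descend to finer invariants — integral cohomology (the tautological bundle of even rank $l$ carries a genuine integral Euler class $e(\eta')$ with $e(\eta')^{2}$ a specific torsion class that is not matched on the source) and the action of the Steenrod algebra on $H^{*}(-;\mathbb{F}_{p})$ for primes $p\nmid\deg f$ (where Poincar\'e duality over $\mathbb{F}_{p}$ makes $f^{*}$ an injection of $\mathcal{A}_{p}$-modules, and tautological bundles of different ranks $k\neq l$ are separated by Wu-type relations such as $Sq^{1}w_{l-1}(\eta')=w_{l}(\eta')$ for $l$ even). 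Pinning down the invariant that is decisive for each exceptional pair, and arranging the argument so that it rules out \emph{every} non-zero value of $\deg f$ and not merely those prime to some fixed prime, is the step I expect to be the main obstacle.
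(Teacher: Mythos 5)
Your opening moves coincide with the paper's: a map of non-zero degree forces $f^{*}$ to be injective on $H^{*}(-;\Q)$, and $H^{*}(\orgr;\Q)$ is computed from the fibration $\orgr\to BSO(k)\times BU(n-k)\to BU(n)$ as (characteristic subalgebra) $\otimes$ (exterior algebra on $\lceil k/2\rceil$ odd generators). But from there your plan never produces the step that actually proves the theorem. The paper does not compare Betti numbers degree by degree; it extracts one numerical invariant, the height of $p_{1}\in H^{4}$, shown to equal $[k/2][(n-k)/2]$ by identifying the characteristic subalgebra with $H^{*}(\C G_{s,t};\Q)$ or $H^{*}(\R\widetilde{G}_{2s+1,2t};\Q)$ and quoting Lemma 4 of \cite{rs}. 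If $\deg f\neq 0$ this forces $[k/2][(n-k)/2]=[l/2][(m-l)/2]$, which combined with the dimension equality $2k(n-k)+\frac{k(k+1)}{2}=2l(m-l)+\frac{l(l+1)}{2}$ gives $|(k-l)(k+l+1)|\le 16$ together with $4\mid (k-l)(k+l+1)$; this leaves only $k=l$ (hence $(n,k)=(m,l)$) or $k=l+1$ with $l\in\{3,5,7\}$, and each of the latter is killed by a short arithmetic check. Your ``for most pairs the Betti-number inequalities already fail, and there are very few admissible pairs in each dimension'' is an expectation, not an argument: you give no criterion that handles an arbitrary pair of equal dimension, and ``run the characteristic-class argument of \cite{rs}'' on the leftovers is not carried out. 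As written, the proposal does not prove the general case.

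On the other hand, your worry about exceptional pairs is well founded, and your example is correct: $\widetilde{I}_{10,3}$ and $\widetilde{I}_{10,4}$ both have dimension $18$, and both rational cohomology rings are $\Q[x]/(x^{2})\otimes\Lambda(y_{5},y_{9})$ with $|x|=4$ --- for $\widetilde{I}_{10,4}$ the degree-$4$ generator is the Euler class of the tautological $4$-plane bundle, while $p_{1}=0$ there (since $(\xi_4\otimes\C)\oplus\gamma_{1}$ is trivial). Hence no argument using only the rational cohomology ring can dispose of this pair, and it is a point where the paper's own argument is too coarse: the step ``$f^{*}p_{1}(m,l)=\lambda p_{1}(n,k)$ with $\lambda\neq 0$, hence the heights agree'' presupposes that $p_{1}$ generates $H^{4}$ on both sides, which fails for $(n,k)=(5,4)$ where $p_{1}$ vanishes and $H^{4}$ is generated by the Euler class. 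So you have located a genuine crux rather than a spurious one. But locating it is all the proposal does: the suggested remedies (integral Euler class, Steenrod operations at primes not dividing $\deg f$) are sketches that you yourself label the main obstacle, and nothing is actually proved for that pair --- nor, in fact, for any other. The verdict is a genuine gap: the proposal lacks both the paper's height-plus-dimension argument that settles all but the delicate cases, and any completed argument for the exceptional pair it correctly identifies.
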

(see Theorem \ref{main1}). Note that $\widetilde{I}_{2n,1} \simeq S^{2n-1}$ and so it is possible to get maps of arbitrary, non-zero degree, $\phi: \orgr \to \widetilde{I}_{2m,1}$ whenever $\dim(\orgr)= 2m-1$. We also prove
\begin{thm}
Consider maps $h \colon \orgr \rightarrow \R \widetilde{G}_{m, l}$ and $g \colon \R \widetilde{G}_{m,l} \rightarrow \orgr $, where $2 \leq l \leq m $, $2 \leq k \leq n $ and $\dim{\orgr} = \dim{\R\widetilde{G}_{m,l}}$. Then $\deg{g} = \deg{h} = 0 $.
\end{thm}
The main technique used to prove the statements above is the result that if $f:X\to Y$ (with $\dim X = \dim Y$) is a map of non-zero degree, then $f^*:H^*(Y;\Q)\to H^*(X;\Q)$ is a monomorphism. We obtain some results on the structure of the cohomology ring of $\orgr$ to deduce the above theorems. \\

The paper is organised as follows. In section 2 we recall the description of the spaces $I_{2n,k}$ and $\orgr$ and express them as  homogeneous spaces. In section 3 we compute the cohomology of  $\orgr$. In section 4 we prove the main theorems.

\section{Isotropic Grassmannian}

%\noindent
In this section we set up the relevant notation and describe the isotropic Grassmannian as a homogeneous space. For $\mathbb{F}$ = $\mathbb{R}$ or $\mathbb{C}$ and $n \in \mathbb{N}$, let $\mathbb{F}^{n}$ denote the $n$-dimensional 
$\mathbb{F}$-vector space (upto isomorphism). Further, let $M(n, \mathbb{F})$ denote the group of linear maps $\mathbb{F}^{n} \rightarrow \mathbb{F}^{n}$,
and let $GL(n,\mathbb{F})$ denote the group of automorphisms $\mathbb{F}^{n} \rightarrow \mathbb{F}^{n}$. Let $U(n)$ := $U (n ; \mathbb{C})$ denote the group of 
unitary linear transformations $\mathbb{C}^{n} \rightarrow \mathbb{C}^{n}$, and $O(n)$ := $O(n ; \mathbb{R})$ denote the group of orthogonal linear transformations
$\mathbb{R}^{n} \rightarrow \mathbb{R}^{n}$. \\

%\noindent
Choose a symplectic form $\omega$ on $\mathbb{R}^{2n}$. Let $Sp(n)$ := $Sp(n ; \mathbb{R})$ denote the set of linear transformations $\mathbb{R}^{2n} \rightarrow \mathbb{R}^{2n}$ 
which preserve this symplectic form $\omega$. 
Choose a basis $\mathfrak{B}$ = $(e_{1} , \cdots , e_{n} ; f_{1} , \cdots , f_{n} )$, of $\mathbb{R}^{2n}$, such that with respect to this basis, $\omega$ 
can be written as :
$$ \omega = de_{1} \wedge df_{1} + \cdots + de_{n} \wedge df_{n}$$

%\noindent
We coordinatise $\mathbb{R}^{2n}$ with respect to this basis and identify $\mathbb{R}^{2n}$ with $\mathbb{C}^{n}$ via the following map: 
$${\bf r} \colon \mathbb{R}^{2n} \longrightarrow \mathbb{C}^{n} $$
$$(x_{1}, \cdots , x_{n} ; y_{1} , \cdots , y_{n}) \mapsto (x_{1} + \imath y_{1} , \cdots ,x_{n} + \imath y_{n} ) $$

%\noindent
Then, ${\bf r}$ induces a map $M(n; \mathbb{C}) \longrightarrow M(2n; \mathbb{R})$. By abuse of notation, we will also call this map ${\bf r}$.
Let $Sp(n)$ := $Sp(n; \mathbb{R}) \in M(2n ; \mathbb{R})$ denote the set of isomorphisms which preserve the symplectic forms $\omega$. Then the image of $U(n)$
under ${\bf r}$ lies in $Sp(n)$. Hence the usual action of $U(n)$ on $\mathbb{C}^{r}$ induces an action of $U(n)$ on $\mathbb{R}^{2n}$
via symplectic morphisms. \\

%\noindent 
Define the isotropic Grassmannian, $I_{2n,k}$, to be the space of $k$-dimensional isotropic vector subspaces of $\mathbb{R}^{2n}$. Then one has the following proposition.\\

%\noindent
  \begin{prop}\label{prop21}
   The isotropic Grassmannian, $I_{2n,k}$, is diffeomorphic to the quotient ${\displaystyle {U(n)}/{(O(k) \times U(n-k))}}$.
  \end{prop}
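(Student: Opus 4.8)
The plan is to realise $I_{2n,k}$ as a single orbit of the $U(n)$-action on $\R^{2n}\cong\C^{n}$ and to identify the isotropy subgroup of a standard point. The starting observation is that, under the identification $\mathbf{r}$, the symplectic form is, up to sign, the imaginary part of the standard Hermitian inner product $\langle\,\cdot\,,\cdot\,\rangle$ on $\C^{n}$: if $z=\mathbf{r}(x,y)$ and $w=\mathbf{r}(u,v)$, then $\Re\langle z,w\rangle=\sum_{j}(x_{j}u_{j}+y_{j}v_{j})$ is the standard Euclidean inner product on $\R^{2n}$, while $\omega((x,y),(u,v))=\sum_{j}(x_{j}v_{j}-u_{j}y_{j})=-\Im\langle z,w\rangle$. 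Since $U(n)$ preserves $\langle\,\cdot\,,\cdot\,\rangle$, it preserves both $\omega$ and the Euclidean metric, and hence acts on the set $I_{2n,k}$ of isotropic $k$-planes.

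The key reformulation, and the one step that actually requires an idea, is the following: a real $k$-dimensional subspace $W\subseteq\C^{n}$ is isotropic for $\omega$ if and only if every Euclidean-orthonormal basis $w_{1},\dots,w_{k}$ of $W$ is a unitary $k$-frame in $\C^{n}$, i.e.\ satisfies $\langle w_{i},w_{j}\rangle=\delta_{ij}$. Indeed, Euclidean orthonormality gives $\Re\langle w_{i},w_{j}\rangle=\delta_{ij}$, and isotropy is precisely the statement $\Im\langle w_{i},w_{j}\rangle=0$. Granting this, I would deduce transitivity of the $U(n)$-action at once: any unitary $k$-frame extends (Gram--Schmidt over $\C$) to a unitary basis of $\C^{n}$, and the element of $U(n)$ carrying the standard basis $(\epsilon_{1},\dots,\epsilon_{n})$ to it carries the standard isotropic plane $W_{0}:=\mathrm{span}_{\R}(\epsilon_{1},\dots,\epsilon_{k})$ to $W$; here $W_{0}$ is isotropic because $\langle\epsilon_{i},\epsilon_{j}\rangle=\delta_{ij}\in\R$.

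Next I would compute $\mathrm{Stab}(W_{0})$. If $g\in U(n)$ fixes $W_{0}$ as a real subspace, then, $g$ being $\C$-linear, $g(\C^{k})=g(W_{0}+\imath W_{0})=W_{0}+\imath W_{0}=\C^{k}$, where $\C^{k}=\mathrm{span}_{\C}(\epsilon_{1},\dots,\epsilon_{k})$, and, being unitary, $g$ also preserves the Hermitian complement $\C^{n-k}$. The block $g|_{\C^{n-k}}$ may be any element of $U(n-k)$; the block $g|_{\C^{k}}$ sends each $\epsilon_{i}$ $(i\le k)$ into $\R^{k}$, so it is the complexification of a real matrix $A$, and unitarity forces $A^{T}A=I$, i.e.\ $A\in O(k)$---and conversely every such pair occurs. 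Hence $\mathrm{Stab}(W_{0})=O(k)\times U(n-k)$ with the indicated embedding, and we obtain a $U(n)$-equivariant bijection $U(n)/(O(k)\times U(n-k))\xrightarrow{\,\sim\,}I_{2n,k}$.

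Finally I would promote this bijection to a diffeomorphism. The orbit map $U(n)\to G_{k}(\R^{2n})$, $g\mapsto g\cdot W_{0}$, is smooth of constant rank (by homogeneity of the $U(n)$-action on the Grassmannian), so it descends to a smooth injective immersion of $U(n)/(O(k)\times U(n-k))$, which is an embedding since $U(n)$ is compact; its image is exactly $I_{2n,k}$ by the transitivity established above. The main obstacle is genuinely just the frame reformulation in the second paragraph; after that, transitivity, the isotropy computation, and the passage from an equivariant bijection to a diffeomorphism are routine, the only point needing a word of care being that the two a priori smooth structures on $I_{2n,k}$---as a $U(n)$-orbit and as a subset of the real Grassmannian---coincide, which is exactly what the constant-rank argument delivers.
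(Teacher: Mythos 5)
Your proposal is correct and follows essentially the same route as the paper: an orbit--stabilizer argument for the $U(n)$-action, with transitivity resting on the observation that on an isotropic subspace the Hermitian form restricts to the real inner product, and the same identification of the stabilizer of $\mathrm{span}_\R(e_1,\dots,e_k)$ as $O(k)\times U(n-k)$. The only differences are cosmetic: you prove transitivity via Hermitian-orthonormal frames and Gram--Schmidt where the paper pieces together an isometry from the decomposition $(V\oplus\imath V)\oplus W$, and you spell out the constant-rank/compactness argument identifying the orbit's smooth structure with that of the coset space, a point the paper leaves implicit.
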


\begin{proof}
%\noindent
Note that $U(n)$ acts on $I_{2n,k}$. Consider any $k$-dimensional isotropic subspace, 
$V \subset \mathbb{R}^{2n}$. Via the identification of $\mathbb{R}^{2n}$ with $\mathbb{C}^{n}$, 
the complement, denoted by $W$ say, of $V \oplus \imath V$ with respect to the form $\omega$,  is a complex 
subspace of $\mathbb{C}^{n}$. Moreover, $(V \oplus \imath V ) \oplus W \cong \mathbb{C}^{n}$ is a 
decomposition with respect to the standard Hermitian inner product on $\mathbb{C}^{n}$.\\
 
%\noindent
Now, given any two $k$-dimensional, isotropic subspaces $V$ and $V'$, one has identifications :
 $$(V \oplus \imath V ) \oplus W \cong \mathbb{C}^{n}$$
 $$(V' \oplus \imath V' ) \oplus W' \cong \mathbb{C}^{n}$$
Therefore, one can choose an orthogonal transformation $\varphi \colon V \rightarrow V'$, which takes $V$ to $V'$ and an isometry 
$\psi$ which takes $W$ to $W'$. Since $\varphi$ is orthogonal, $\imath \varphi$ will take $\imath V$ to $\imath V'$. Thus one obtains
an isometry $\mathbb{C}^{n}$ to $\mathbb{C}^{n}$ which takes $V$ to $V'$, and hence, the action of $U(n)$ is transitive. \\
 
% \noindent
 Let $V_{k}$  denote the $k$-dimensional subspace of $\mathbb{R}^{2n}$ generated by the basis vectors 
 $e_{1} , \cdots , e_{k} $. 
 Then any isometry $A \in U(n)$ with $A(V_{k})$ = $V_{k}$, is orthogonal when restricted to $V_{k}$. Additionally, 
 it gives isomorphisms  $V_{k} \oplus \imath V_{k} \rightarrow V_{k} \oplus \imath V_{k} $ and 
 $(V_{k} \oplus \imath V_{k})^{\perp} \rightarrow  (V_{k} \oplus \imath V_{k})^{\perp} $. 
 Therefore, the stabilizer of $V_{k}$ is $O(k) \times U(n-k)$.
 
 \end{proof}
 
% \noindent
It follows that $I_{2n, k}$ is a $2k(n-k) + [{\displaystyle{k(k+1)}/{2}}]$-dimensional manifold. In fact, for $k$ = $1$, $I_{2n, k}$
is the real projective space, $\mathbb{RP}^{2n-1}$. One notes that $I_{2n,k}$ is orientable if and only if  $k$  is odd (\cite{mik}). \\

%\noindent
We consider  $\widetilde{I}_{2n,k}$, the space of  $k$-dimensional, oriented, isotropic subspaces of $\mathbb{R}^{2n}$, called the oriented isotropic Grassmannian.  
The oriented isotropic Grassmannian, $\widetilde{I}_{2n,k}$, is again a $2k(n-k) + [{\displaystyle{k(k+1)}/{2}}]$-dimensional manifold. As in Proposition \ref{prop21}, one has:
 
% \noindent
  \begin{prop}
   The isotropic Grassmannian, $\widetilde{I}_{2n,k}$, is diffeomorphic to the quotient ${\displaystyle {U(n)}/{(SO(k) \times U(n-k))}}$.
  \end{prop}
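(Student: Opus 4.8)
The plan is to follow the proof of Proposition \ref{prop21} almost verbatim, the only new ingredient being the bookkeeping of orientations. First I would observe that $U(n)$, acting on $\R^{2n}$ by symplectic morphisms as above, acts on $\orgr$: an element $A \in U(n)$ carries a $k$-dimensional isotropic subspace $V$ to the isotropic subspace $A(V)$, and the linear isomorphism $A|_V \colon V \to A(V)$ transports an orientation of $V$ to one of $A(V)$, so the action on unoriented isotropic $k$-planes lifts to an action on oriented ones. The goal is then to identify the orbit of a fixed oriented plane with the whole space and to compute its stabilizer.

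The first genuine step is transitivity of this lifted action. By Proposition \ref{prop21}, $U(n)$ already acts transitively on $I_{2n,k}$ with stabilizer $O(k) \times U(n-k)$ at $V_k$, and any element of $(O(k) \setminus SO(k)) \times \{1\}$ restricts to an orientation-reversing automorphism of $V_k$. Hence, given oriented isotropic $k$-planes $(V, o)$ and $(V', o')$, I would first pick $A \in U(n)$ with $A(V) = V'$, and then, should $A$ send $o$ to $-o'$, post-compose with a stabilizer element of $V'$ that reverses its orientation; the composite carries $(V,o)$ to $(V',o')$. Equivalently, one can mimic Proposition \ref{prop21} directly and simply insist that the chosen isometry $\varphi \colon V \to V'$ be orientation-preserving, which is always possible. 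Either way one concludes that $U(n)$ acts transitively on $\orgr$.

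Next I would compute the stabilizer of $V_k$ equipped with the orientation given by the ordered basis $(e_1, \dots, e_k)$. Exactly as in Proposition \ref{prop21}, an $A \in U(n)$ fixing $V_k$ lies in $O(k) \times U(n-k)$, the first factor recording $A|_{V_k}$; and $A$ preserves the chosen orientation if and only if $A|_{V_k} \in SO(k)$. Thus the stabilizer is $SO(k) \times U(n-k)$, and the orbit map descends to a continuous bijection $U(n)/(SO(k)\times U(n-k)) \to \orgr$. Since $SO(k) \times U(n-k)$ is a closed subgroup of the compact Lie group $U(n)$, the quotient is a smooth manifold and the orbit map is a submersion, so this bijection is a diffeomorphism; its dimension $\dim U(n) - \dim\bigl(SO(k)\times U(n-k)\bigr) = 2k(n-k) + k(k+1)/2$ matches that of $\orgr$, consistently with $\orgr$ being a double cover of $I_{2n,k}$.

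I do not expect a real obstacle here: the statement is a routine refinement of Proposition \ref{prop21}. The only point that warrants a moment's care is the transitivity of the $U(n)$-action on oriented planes, and, as indicated above, this reduces immediately to the presence of orientation-reversing elements in the unoriented stabilizer $O(k) \times U(n-k)$.
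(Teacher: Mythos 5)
Your proposal is correct and is essentially the argument the paper intends: the paper gives no separate proof, simply asserting the result "as in Proposition \ref{prop21}", and your write-up is exactly that adaptation (transitivity via orientation-reversing elements of the unoriented stabilizer, and the stabilizer of the oriented plane being $SO(k)\times U(n-k)$). No gaps; the orientation bookkeeping and the closed-subgroup/homogeneous-space argument are handled appropriately.
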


\section{Cohomology of the Oriented Isotropic Grassmannian}

%\noindent
In this section we compute the cohomology of the oriented isotropic Grassmannian, $\widetilde{I}_{2n,k}$. The cohomology with $\R$ coefficients was computed in \cite{mor-nig} using formulas for the real cohomology of homogeneous spaces. We compute the same algebraically, fixing appropriate notation along the way. We use the Serre spectral sequence and the following fibrations:
$$
 \widetilde{I}_{2n,k} \longrightarrow BSO(k) \times BU(n-k) \longrightarrow BU(n)
$$
$$
 U(n) \longrightarrow \widetilde{I}_{2n,k} \longrightarrow BSO(k) \times BU(n-k)
$$

%\noindent
The first fibration induces the Serre spectral sequence with $E_{2}^{p,q}$ term given by
\begin{equation}
 \label{sss1}
 E_{2}^{p,q} = H^{p} (BU(n); \mathbb{Q}) \otimes H^{q} (\widetilde{I}_{2n,k} ; \mathbb{Q})
\end{equation}
which converges to $H^{p+q} (BSO(k) \times BU(n-k) ; \mathbb{Q})$.\\

%\noindent
The second fibration induces the Serre spectral sequence with $E_{2}^{p,q}$ term given by
\begin{equation} \label{sss2}
 E_{2}^{p,q} = H^{q} (U(n); \mathbb{Q}) \otimes H^{p} (BSO(k) \otimes BU(n-k) ; \mathbb{Q})
\end{equation}
which converges to $H^{p+q} (\widetilde{I}_{2n,k} ; \mathbb{Q})$.\\

%\noindent
It is well-known (\cite{mt}) that 
 $$H^{*} (BU(n-k); \mathbb{Q}) \cong \mathbb{Q} [c_{1} , c_{2} , \cdots , c_{n-k}]$$
where $c_{i} \in H^{2i}(BU(n-k);\mathbb{Q} )$ is the $i^{th}$ Chern class of the universal complex $(n - k)$-plane bundle $\gamma_{n-k}$; and, 
$$ H^{*} (U(n); \mathbb{Q}) \cong \wedge_{\mathbb{Q}} [x_{1} , x_{3} , \cdots , x_{2n-1}]$$
where $x_{i} \in H^{i}(U(n); \mathbb{Q})$ and $\wedge$ denotes the exterior algebra. \\

%\noindent
For odd $k$ (= $2m + 1$)
 $$H^{*} (BSO(k); \mathbb{Q}) \cong \mathbb{Q} [p_{1} , p_{2} , \cdots , p_{m}]$$
where $p_{i} \in H^{4i}(BSO(k); \mathbb{Q})$ are the Pontrjagin classes of the universal oriented $k$-plane bundle $\xi_k$. 
For the case $k$ = $2m$
 $$ H^{*} (BSO(k); \mathbb{Q}) \cong \mathbb{Q} [p_{1} , p_{2} , \cdots , p_{m-1} , e_k ]$$
where $e_k \in H^{k}(BSO(k); \mathbb{Q})$ is the Euler class of $\xi_k$. In this case one has $p_m(\xi_k)=e_k^2$.\\

%\noindent
Note that the inclusion $SO(k)\times U(n-k) \subset U(n)$ is induced by $(\R^k\otimes \C) \oplus \C^{n-k}\cong \C^n$. 
 It follows that on classifying spaces $BSO(k)\times BU(n-k) \to BU(n)$ classifies the complex bundle $\xi_k\otimes \C \oplus \gamma_{n-k}$. 
 Hence we have a commutative diagram of fibrations 
 $$\xymatrix{ U(n) \ar[r] \ar@{=}[d] & \orgr \ar[r] \ar[d] & BSO(k)\times BU(n-k) \ar[d]^{(\xi_k\otimes \C)\oplus \gamma_{n-k}}\\ 
              U(n) \ar[r] & EU(n) \ar[r] & BU(n) }$$
This induces a diagram  of spectral sequences which we may use to compute the differentials in \ref{sss2}. Let $\lambda^*$ denote the homomorphism from the spectral sequence $\wedge_\Q(x_1,x_3\cdots) \otimes \Q[c_1,c_2,\cdots] \implies H^*(\mathit{pt};\Q)$ to \ref{sss2}. As the classes $x_{2i-1}$ are transgressive with $d_{2i}(x_{2i-1})=c_i$, so are the classes $\lambda^\ast(x_{2i-1})$. Therefore we obtain 
\begin{equation}\label{diff}
 \begin{array}{lll}
    d(x_{2i -1})
    &=& \lambda^*(c_i) \\
    &=& c_{i} ((\xi_{k} \otimes \mathbb{C}) \oplus \gamma_{n-k})\\
    &=& \sum_{j=0}^{\infty} c_{j} (\xi_{k} \otimes \mathbb{C}) c_{i-j} (\gamma_{n-k})\\
    &=& \sum_{j=0}^{[i/2]} p_{j} c_{i-2j}\\
    &=& c_i + \sum_{j=1}^{[i/2]} p_{j} c_{i-2j}\\
   \end{array}
\end{equation}

\begin{prop}\label{initial}
Let $2 < k \leq n $. If $k < n$, the cohomology groups $H^i(\widetilde{I}_{2n,k})$ are $0$ if $i\leq 3$ and $H^4(\orgr)$ is generated by $p_1$. In the case $k=n$,  the cohomology group $H^{1}(\widetilde{I}_{2n,n})$ is isomorphic to $\Z$ and $H^{4}(\widetilde{I}_{2n,n})$ is zero.
\end{prop}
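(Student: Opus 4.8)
The plan is to read off $H^{\leq 4}(\orgr;\Q)$ from the Serre spectral sequence \ref{sss2} of the fibration $U(n)\to\orgr\to BSO(k)\times BU(n-k)$. In total degrees $\leq 4$ the only exterior generators of the fibre that can contribute are $x_1$ and $x_3$, so the only differentials that enter are $d_2$ and the transgression $d_4$, and both are pinned down by \ref{diff}: $d_2(x_1)=c_1$, and since $x_3$ survives to $E_4^{0,3}$ (there is no room for $d_2(x_3)$ or $d_3(x_3)$, as $H^2(U(n);\Q)=H^3(BSO(k)\times BU(n-k);\Q)=0$) one has $d_4(x_3)=c_2+p_1$, with the convention that $c_2=0$ when $n-k\leq 1$. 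Equivalently, and this is how I would actually organise the bookkeeping, $H^*(\orgr;\Q)$ is the cohomology of the Koszul DGA $H^*(BSO(k)\times BU(n-k);\Q)\otimes\wedge_{\Q}(x_1,x_3,\dots,x_{2n-1})$ with $d$ the derivation determined by \ref{diff} (and zero on the base classes); it then suffices to compute this through degree $4$.

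Assume first $k<n$, so $c_1\neq 0$. Then $x_1$ is not a cocycle, giving $H^1=0$; the only degree-$2$ class, $c_1$, is the coboundary $d(x_1)$, giving $H^2=0$; in degree $3$ the only classes are $x_3$ and $c_1x_1$, with $d(x_3)=c_2+p_1\neq 0$ (since $p_1\neq 0$ in $H^4(BSO(k);\Q)$ for $k>2$) and $d(c_1x_1)=c_1^2\neq 0$, and there are no degree-$3$ base classes and no degree-$3$ coboundaries, so $H^3=0$; and in degree $4$ the cocycles are exactly the degree-$4$ classes of the base (one checks $x_1x_3$ is not a cocycle), while the coboundaries are spanned by $c_2+p_1$ and $c_1^2$, so that $H^4$ is generated by the class of $p_1$. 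Assume next $k=n$, so there are no Chern classes and $d(x_1)=0$: now $x_1$ is a non-bounding cocycle, so $H^1(\orgr;\Q)\cong\Q$, which I would promote to $H^1(\widetilde{I}_{2n,n};\Z)\cong\Z$ by computing $\pi_1(U(n)/SO(n))\cong\Z$ from the homotopy exact sequence of $SO(n)\to U(n)\to U(n)/SO(n)$, using that $\pi_1 SO(n)=\Z/2\to\pi_1 U(n)=\Z$ is zero and $\pi_0 SO(n)=0$; and in degree $4$ one now has $d(x_3)=p_1$, so $p_1$ is a coboundary and $H^4(\widetilde{I}_{2n,n};\Q)=0$.

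The step I expect to be the main obstacle is the honest accounting of the degree-$4$ part of $H^*(BSO(k)\times BU(n-k);\Q)$: one must check that $p_1$, $c_1^2$ and $c_2$ are linearly independent there, so that $p_1$ really does survive and is not itself killed by the transgression $d_4(x_3)$, and one must keep track of the extra degree-$4$ Euler class that appears precisely when $k=4$, together with the degenerate case $n=k+1$ in which $c_2$ is absent and $d(x_3)=p_1$. For $k$ odd and for even $k\geq 6$ this count falls out at once from the polynomial descriptions of $H^*(BSO(k);\Q)$ and $H^*(BU(n-k);\Q)$ recorded in the previous section, and the remaining small even cases $k=4$ (including $k=n=4$) should be settled by a direct inspection of the DGA in degree $4$.
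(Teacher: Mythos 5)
You follow essentially the same route as the paper: the Serre spectral sequence \ref{sss2} (organised, in this range, as the Koszul differential on $x_1,x_3$ over the base) with $d_2(x_1)=c_1$ and $d_4(x_3)=c_2+p_1$ from \ref{diff}, together with $\pi_1(U(n)/SO(n))\cong\Z$ for the Lagrangian case $k=n$. For $k=n$, and for $k<n$ with $k$ odd or $k\geq 6$ even and $n\geq k+2$, your bookkeeping is complete and agrees with the paper's; indeed you are more careful than the paper in isolating exactly where the degree-$4$ count could go wrong. Note also that your degree $\leq 3$ computation is unaffected by the issues below, since $d_4(x_3)=c_2\pm p_1$ is nonzero in the base for every $k\geq 3$, so $H^{\leq 3}=0$ holds in all cases $2<k<n$.

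The genuine gap is that the two cases you postpone to ``a direct inspection of the DGA in degree $4$'' cannot be closed by such an inspection, because the stated conclusion fails there. For $k=4<n$ the Euler class $e\in H^4(BSO(4);\Q)$ is a permanent cycle that no differential can reach: every $d(x_{2i-1})$ lies in the ideal generated by the Chern classes of $(\xi_4\otimes\C)\oplus\gamma_{n-4}$, which involve $e$ only through $p_2=e^2$. Hence $E_\infty^{4,0}$ is spanned by the classes of $p_1$ and $e$, so $H^4(\widetilde{I}_{2n,4};\Q)$ is two-dimensional --- consistent with the paper's own algebra $A(n,2m)$ and Lemma \ref{orgr}, but not with ``$H^4$ generated by $p_1$''. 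For $n=k+1$ one has $c_2(\gamma_1)=0$, so $d_4(x_3)=\pm p_1$ kills $p_1$ itself and $H^4$ is zero (respectively spanned by $e$ alone when $(n,k)=(5,4)$, where Lemma \ref{orgr} gives $A(5,4)\cong H^*(\R\widetilde{G}_{5,4};\Q)=H^*(S^4;\Q)$). So your plan, as written, does not and cannot prove the proposition in the flagged cases. To be fair, the paper's own proof has the same blind spot --- it asserts $d_4(x_3)=c_2+p_1\neq0$ and concludes $H^4\cong\Q$ generated by $p_1$ without treating $k=4$ or $n-k=1$ --- so what your flags actually expose is that the statement needs the additional hypotheses $k\neq 4$ and $n\geq k+2$ (or a corresponding reformulation, with attendant adjustments where the proposition is invoked later), not that your method is wrong.
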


\begin{proof}
 For $k=n$, the space $\orgr \cong U(n)/SO(n)$, thus the fundamental group and hence $H^1$ is $\cong \Z$. 
 Otherwise in the spectral sequence $\ref{sss2}$ one has a class $c_1$ in $E_2^{2,0}$. In degrees $\leq 3$ the spectral sequence \ref{sss2} is $\cong$ $\wedge(x_1,x_3)\otimes \Q[c_1]$ and from \ref{diff} we get that $d_2(x_1)=c_1$. Hence the only possible class in $H^{*\leq 3}$ is $x_3$. 

Note that \ref{diff} also gives $d_4(x_3)= c_2 + p_1$ if $k\geq 2$ and $d_4(x_3)= c_2$ if $k=1$. Thus we conclude that $H^{*\leq 3}$ is $0$ if $k<n$, and if in addition $n>k\geq 2$ then $H^4(\orgr) (\cong \Q)$ generated by $p_1$.  In the case $k=n$, $d_4(x_3)= p_1$, and hence $H^4(\orgr)$ becomes zero.
\end{proof}

%\noindent
We may compute further in the spectral sequence $\ref{sss2}$. Notice that the formula \ref{diff} is of the form $d(x_{2i-1}) = c_i + \cdots$ and so the class $c_i$ is not zero if $i\leq n-k$. Thus the elements $d(x_1), d(x_3) \ldots d(x_{2(n-k)-1})$ form a regular sequence in $E_2^{*,0}$. It follows that no multiple of $x_{2j -1}$, for $j\leq n-k$, can be a permanent cycle. Therefore any positive degree classes surviving to the $E_\infty$-page must have degree $> 2(n-k)+1$. In fact we have the Proposition 

\begin{prop}
Suppose $2 \leq k <n$. The cohomology algebra $H^\ast(\orgr;\Q)$ has algebra generators $p_1,\cdots , p_m$ in degrees $4,8,\cdots$ when $k=2m+1$ is odd. If $k=2m$ there is an additional generator $e_m$ in degree $2m$ that satisfies $e_m^2 = p_m$. Other algebra generators are in degrees $\ge 2(n-k)+1$.   

\end{prop}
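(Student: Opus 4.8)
The plan is to extract the ring structure of $H^\ast(\orgr;\Q)$ in low degrees directly from the spectral sequence \ref{sss2} and the transgression formula \ref{diff}, showing that nothing new appears below degree $2(n-k)+1$. First I would record that, because $SO(k)\times U(n-k)\subset U(n)$ is the inclusion $(\R^k\otimes\C)\oplus\C^{n-k}\cong\C^n$, \emph{every} exterior generator $x_{2i-1}\in H^{2i-1}(U(n);\Q)$ is transgressive in \ref{sss2}, with
$$d(x_{2i-1})\;=\;y_i\;:=\;c_i\bigl((\xi_k\otimes\C)\oplus\gamma_{n-k}\bigr)\;=\;c_i+\sum_{j=1}^{[i/2]}p_j\,c_{i-2j}\;\in\;R:=H^\ast(BSO(k)\times BU(n-k);\Q)$$
exactly as in \ref{diff}; here $c_j=0$ for $j>n-k$, $p_j=0$ rationally for $j>m$, and $p_m$ is read as $e_k^2$ when $k=2m$. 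Since $H^\ast(U(n);\Q)$ is the exterior algebra on these transgressive classes, the spectral sequence \ref{sss2} is of Koszul type; equivalently, $H^\ast(\orgr;\Q)$ is the cohomology of the differential graded algebra $\bigl(R\otimes\wedge(x_1,x_3,\dots,x_{2n-1}),\,d\bigr)$ with $d(x_{2i-1})=y_i$ and $d|_R=0$.

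Second, I would sharpen the regularity remark made just before the statement. Each $y_i$ is $c_i$ plus a polynomial in $p_1,p_2,\dots$ (or $e_k$) and $c_1,\dots,c_{i-1}$; thus the triangular change of variables replacing $c_i$ by $y_i$ exhibits $y_1,\dots,y_{n-k}$, together with the Pontrjagin/Euler generators, as a new polynomial generating set for $R$. Hence $y_1,\dots,y_{n-k}$ is a regular sequence and $R/(y_1,\dots,y_{n-k})\cong H^\ast(BSO(k);\Q)=:P$. Writing $d=d'+d''$, where $d'$ uses only $x_1,\dots,x_{2(n-k)-1}$ (so $d'(x_{2i-1})=y_i$ for $i\le n-k$) and $d''$ only $x_{2(n-k)+1},\dots,x_{2n-1}$, the $d'$-subcomplex is the Koszul complex of this regular sequence, whose cohomology $P$ sits in a single Koszul degree; so the spectral sequence of this double complex collapses and
$$H^\ast(\orgr;\Q)\;\cong\;H\bigl(P\otimes\wedge(x_{2(n-k)+1},\dots,x_{2n-1}),\,\bar d\bigr),\qquad \bar d(x_{2i-1})=\bar y_i\in P,$$
where $\bar y_i$, the image of $y_i$, has degree $2i\ge 2(n-k)+2$.

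From this description the statement is immediate. Every exterior generator remaining in the reduced complex has degree $\ge 2(n-k)+1$ and $\bar d$ raises degree, so in degrees $\le 2(n-k)$ the complex is just $P$ with zero differential; hence $H^j(\orgr;\Q)\cong H^j(BSO(k);\Q)$ as graded rings for $j\le 2(n-k)$. In particular $p_1,\dots,p_m$ — with $e_m:=e_k$ adjoined and $e_m^2=p_m$ when $k=2m$ — survive and stay indecomposable, every algebra generator of $H^\ast(\orgr;\Q)$ of degree $\le 2(n-k)$ is one of these, and all remaining algebra generators lie in degrees $\ge 2(n-k)+1$.

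The step I expect to be the main obstacle is the reduction in the second paragraph: justifying that \ref{sss2} is controlled entirely by the transgressions $d(x_{2i-1})=y_i$ with no hidden higher differentials — this is the standard Koszul-complex behaviour of a Serre spectral sequence whose fibre has exterior rational cohomology on transgressive generators, and is automatic here since $\orgr=U(n)/(SO(k)\times U(n-k))$ is formal — and that one may cancel the regular sequence $y_1,\dots,y_{n-k}$ before treating the top generators $x_{2(n-k)+1},\dots,x_{2n-1}$. One must also watch the even case: the class $e_k$ itself never occurs in any $y_i$ (only $e_k^2=p_m$ does), so it can never lie in the image of a differential and automatically survives as an algebra generator in degree $2m=k$.
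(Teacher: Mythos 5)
Your proposal is correct, but its second half runs along a genuinely different track from the paper. Both arguments start from the transgression formula \ref{diff} and the observation that $d(x_1),\dots,d(x_{2(n-k)-1})$ form a regular sequence, so that nothing involving $x_1,\dots,x_{2(n-k)-1}$ survives; but where the paper then stays inside the Serre spectral sequence \ref{sss2}, computes the bottom row $E_\infty^{\ast,0}$ to be the algebra $A(n,k)$, and identifies $A(n,k)$ with the cohomology of a complex or oriented real Grassmannian (Lemma \ref{orgr}) so that the generators can be quoted from \cite{rs}, you instead pass to the Koszul model $H^\ast(BSO(k)\times BU(n-k);\Q)\otimes\wedge(x_1,\dots,x_{2n-1})$, make the triangular change of variables $c_i\mapsto y_i$, cancel the Koszul complex of the regular sequence $y_1,\dots,y_{n-k}$, and read off that in degrees $\le 2(n-k)$ the cohomology coincides with $H^\ast(BSO(k);\Q)$ while everything else is carried by exterior classes of degree $\ge 2(n-k)+1$. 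Your route is self-contained (no Lemma \ref{orgr}, no \cite{rs}) and yields the slightly sharper statement $H^j(\orgr;\Q)\cong H^j(BSO(k);\Q)$ for $j\le 2(n-k)$; the paper's route has the advantage of producing the full bottom-row algebra $A(n,k)$, which it needs later anyway (e.g.\ for the height of $p_1$ in Proposition \ref{orderp1}). Two points to tighten: the identification of $H^\ast(\orgr;\Q)$ with the cohomology of the differential graded algebra $\bigl(H^\ast(BSO(k)\times BU(n-k);\Q)\otimes\wedge(x_1,\dots,x_{2n-1}),d\bigr)$ should be justified not by ``formality of $\orgr$'' (neither obvious nor the relevant point) but by the classical Cartan--Borel theorem on homogeneous spaces of compact connected Lie groups, equivalently the collapse of the Eilenberg--Moore spectral sequence computing $\mathrm{Tor}_{H^\ast(BU(n);\Q)}\bigl(\Q,H^\ast(BSO(k)\times BU(n-k);\Q)\bigr)$, which is exactly the tool behind \cite{mor-nig}; and your side claims that every $p_i$ ``stays indecomposable'' and that $e_k$ survives are not actually established by the low-degree isomorphism once $4i$, respectively $2m$, exceeds $2(n-k)$ (survival of $e_k$ does follow from a parity-in-$e_k$ and degree argument on the ideal generated by $\bar y_{n-k+1},\dots,\bar y_n$), but neither claim is needed for the generation statement the Proposition asserts.
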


\begin{proof}
In view of the discussion above it suffices to prove the first two statements for the horizontal $0$-line $E_\infty^{\ast,0}$. Note that for $j$ odd, the equation \ref{diff} gives 
$$d(x_{2j-1}) =  c_j + \sum_{l=1}^{[j/2]} p_{l} c_{j-2l}$$

Inductively we conclude that $d_{2j}(x_{2j-1})= c_j$. We have $d_2(x_1)=c_1$ and thus $c_1$ is $0$ in the $E_3$-page. Inductively $c_{j-1}$ is $0$ in the group $E_{2j-3}^{2j-4,0}$. Hence, the equation above implies $d_{2j}(x_{2j-1})=c_j$ as the other $c_{odd\leq j-1}$ are $0$ in $E_{2j}$. It follows that $c_j$ is $0$ in $E_{2j+1}$. 

The remaining classes in the horizontal $0$-line are $p_i$ , $c_{2j}$ and $e_m$ if $k$ is even. The remaining differentials are generated by 
$$d(x_{4j-1}) =  \sum_{l=0}^{j} p_{l} c_{2j-2l}$$
Hence the horizontal $0$-line is the graded algebra $A(n,k)$ below. The Proposition now follows from Lemma $\ref{orgr}$ and \cite{rs}.
\end{proof}

%\noindent
Let $\R \widetilde{G}_{n,k}$ denote the oriented real Grassmannian of all $k$-dimensional oriented subspaces of $\mathbb{R}^{n}$. As a space this is $\simeq SO(n)/SO(k)\times SO(n-k)$. Let $\C G_{n,k}$ denote the Grassmannian of $k$-planes in $\C^n$.  
Define the graded algebras $A(n,k)$ for $n>k$ as (with notations as above) 
$$
A(n,k) = \left\{ \begin{array}{rl}
\frac{\Q[p_1,\cdots, p_m, c_2,c_4,\cdots, c_{n-2m-2}]}{(d(x_3),\ldots, d(x_{2n-1}))} &\mbox{ if $n$ is even, $k=2m+1$} \\
 \frac{\Q[p_1,\cdots, p_m, c_2,c_4,\cdots, c_{n-2m-1}]}{(d(x_3),\ldots, d(x_{2n-3}))} &\mbox{ if $n$ is odd, $k=2m+1$} \\
\frac{\Q[p_1,\cdots, p_{m-1},e_m, c_2,c_4, \cdots, c_{n-2m}]}{(d(x_3),\ldots, d(x_{2n-1}))} &\mbox{ if $n$ is even, $k=2m$}\\
\frac{\Q[p_1,\cdots, p_{m-1},e_m, c_2,c_4, \cdots, c_{n-2m-2}]}{(d(x_3),\ldots, d(x_{2n-3}))} &\mbox{ if $n$ is odd, $k=2m$}
\end{array} \right.
$$

\begin{lemma}\label{orgr} There are isomorphisms of graded algebras \\
a) $A(2s,2m+1) \cong H^{*/2}(\C G_{s-1,m};\Q)$\\
b) $A(2s+1,2m+1) \cong H^{*/2}(\C G_{s,m};\Q)$\\
c) $A(2s,2m) \cong H^*( \R \widetilde{G}_{2s+1,2m};\Q)$\\
d) $A(2s+1,2m) \cong H^*(\R \widetilde{G}_{2s+1,2m};\Q)$\\
\end{lemma}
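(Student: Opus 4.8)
\textbf{Proof plan for Lemma \ref{orgr}.}
The strategy is to identify each quotient algebra $A(n,k)$ with the cohomology of a smaller classical Grassmannian by exhibiting an explicit graded algebra isomorphism, and then invoke the known presentations of these cohomology rings. Recall from \cite{mt} that $H^*(\C G_{s,m};\Q)$ is the quotient of $\Q[c_1,\ldots,c_m]$ by the ideal generated by the components of $(1+c_1+\cdots+c_m)^{-1}$ in degrees $> 2(s-m)$, equivalently by the Chern classes $\bar c_{s-m+1},\ldots, \bar c_s$ of the complementary bundle; and from \cite{rs} $H^*(\R \widetilde G_{2s+1,2m};\Q)$ has an analogous presentation with Pontrjagin classes and an Euler class $e_m$ satisfying $e_m^2=p_m$, the relations coming from $W\oplus W^\perp$ being trivial. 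The main point is that the relations $d(x_{4j-1})=\sum_{l=0}^j p_l c_{2j-2l}$ appearing in $A(n,k)$ are \emph{formally the same} as the relations that express triviality of a sum of bundles, once one reads $p_l$ (resp.\ $e_m$) as the appropriate characteristic class of the ``$\xi_k$'' factor and $c_{2j-2l}$ as the Chern classes of the ``$\gamma_{n-k}$'' factor.

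Concretely, I would proceed as follows. First, rewrite the defining ideal of $A(n,k)$: from \ref{diff}, $d(x_{2i-1}) = \sum_{j=0}^{[i/2]} p_j c_{i-2j}$, which is exactly the degree-$2i$ component of the total class $c(\xi_k\otimes\C)\,c(\gamma_{n-k})$. After the odd-index classes $c_{\mathrm{odd}}$ have been eliminated (they are hit by the $d(x_{2j-1})$ with $j$ odd, as shown in the Proposition above), the surviving generators are $p_1,\ldots,p_m$ (or $p_1,\ldots,p_{m-1},e_m$ when $k=2m$) together with $c_2,c_4,\ldots$, and the surviving relations are $d(x_3),d(x_7),\ldots$, i.e.\ the even-degree components of $c(\xi_k\otimes\C)c(\gamma_{n-k})=1$. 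Second, count generators and relations to pin down the target: the number of $c_{2i}$'s is $\lfloor (n-k-1)/2\rfloor$ or so (this is where the four cases by parity of $n$ and $k$ come from), and the number of relations $d(x_{4j-1})$ with $4j-1\le 2n-1$ must be read off; matching these dimensions against $\C G_{s-1,m}$, $\C G_{s,m}$, or $\R\widetilde G_{2s+1,2m}$ gives the claimed index bookkeeping. Third, define the isomorphism: in cases (a),(b) send $p_j\mapsto$ (the class pulled back from $BSO(2m+1)\to BU(2m+1)$ corresponding to) $(-1)^j$ times the elementary symmetric function realized by $c_{2j}$ of the tautological bundle on $\C G$, send $c_{2i}\mapsto$ Chern classes of the complementary bundle, and check the relations correspond; in cases (c),(d) send $p_j,e_m$ to the Pontrjagin/Euler classes of the tautological $2m$-plane bundle and $c_{2i}$ to Pontrjagin classes of the complement on $\R\widetilde G_{2s+1,2m}$, using $p_i(\xi)=(-1)^i c_{2i}(\xi\otimes\C)$ and the relation $e_m^2=p_m$ to see that the two presentations coincide.

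The genuinely delicate step is the last one: verifying that the ideal $(d(x_3),\ldots,d(x_{2n-1}))$ in the polynomial ring on the \emph{retained} generators equals, under the proposed substitution, the defining ideal of the target cohomology ring — in particular that no extra relations are needed and none are lost. This requires the observation that $(1+c_1+\cdots)^{-1}$ truncates correctly: on $\C G_{s,m}$ the relations $\bar c_{>s-m}=0$ are equivalent, given $c_{\mathrm{odd}}=0$ has been imposed, to the vanishing of the high-degree even components of the inverse total class, which is precisely what the remaining $d(x_{4j-1})$ encode for the right range of $j$. The parity analysis determining exactly which $\C G$ (with $s-1$ versus $s$) or which $\R\widetilde G$ shows up is a finite but fiddly computation with the top index $2n-1$ of the last relation against $2(n-k)+1$; I expect this bookkeeping, rather than any conceptual difficulty, to be the main obstacle, and it is essentially forced once one writes out the four cases. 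For the even-$k$ cases one additionally needs that introducing $e_m$ with $e_m^2=p_m$ on the source matches the analogous square-root phenomenon for the oriented real Grassmannian $\R\widetilde G_{2s+1,2m}$, which is standard from \cite{rs}.
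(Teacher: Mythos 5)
Your proposal is correct and follows essentially the same route as the paper: the paper's proof simply identifies $A(n,k)$ with the standard Borel presentations $H^*(\C G_{n,k})\cong H^*(BU(k))\otimes H^*(BU(n-k))/(H^+(BU(n)))$ and (for odd ambient dimension) $H^*(\R\widetilde{G}_{n,k})\cong H^*(BSO(k))\otimes H^*(BSO(n-k))/(H^+(BSO(n)))$, which is exactly your observation that the relations $d(x_{4j-1})=\sum_l p_l c_{2j-2l}$ are the graded components of a product of total characteristic classes, together with $e_m^2=p_m$ in the even case. The remaining index/parity bookkeeping you defer is the same bookkeeping the paper leaves implicit, and over $\Q$ the signs in $p_j(\xi)=(-1)^jc_{2j}(\xi\otimes\C)$ can be absorbed into a change of generators, so they cause no difficulty.
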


\begin{proof}
This follows from the computation of the cohomology algebras of the Grassmannians using (with $\Q$ coefficients) : 
$$H^*(\C G_{n,k}) \cong H^*(BU(k))\otimes H^*(BU(n-k))/(H^+(BU(n))$$
and if $n$ is odd.
$$H^*(\R \widetilde{G}_{n,k}) \cong H^*(BSO(k))\otimes H^*(BSO(n-k))/(H^+(BSO(n))$$
\end{proof}

\begin{remark}
One may compare this to the expression obtained in \cite{mor-nig}. Observe that the ring of characteristic classes $\mathcal{A}$ of the principal bundle $U(n)\to \orgr$ matches the graded algebra $A(n,k)$ above. Note that the cohomology of $\orgr$ is $\mathcal{A}\otimes \Lambda$ where $\Lambda$ is an exterior algebra on classes in degrees $d\in S_{n,k}$ (\cite{mor-nig}, Theorem 1.7) with  
$$
S_{n,k}= \left\{ \begin{array}{rl}
\{4[\frac{n-k+1}{2}]+1, 4[\frac{n-k+1}{2}]+3,\cdots, 2n-3\} &\mbox{ if both $n$ and $k$ are even} \\
\{4[\frac{n-k+1}{2}]+1, 4[\frac{n-k+1}{2}]+3,\cdots, 2n-1\} &\mbox{otherwise}
\end{array} \right.
$$
 It follows that the Poincar\'e polynomial of $\orgr$ is given by 
$$p_{\orgr} (x) = p_{A(n,k)}(x) \Pi_{d\in S_{n,k}} (1+x^d)$$
which may be computed from the known formulas for complex and real Grassmannians.  
\end{remark}

%\noindent
Recall that height of a nilpotent element, $x$, in an algebra, is defined to be the least positive integer $n$, such that $x^{n} \neq 0$ but $x^{n+1} = 0$.

\begin{prop}\label{orderp1}
The height of the element, $p_{1}$ in $H^*(\orgr)$ is $t(s-t)$ for $(n,k)\in \{(2s+2,2t+1), (2s+1,2t+1), (2s,2t),(2s+1,2t)\}$ . 
% (That is, $p_1^{t(s-t)} \neq 0 $ but  $p_1^{t(s-t)+1} = 0 $)
\end{prop}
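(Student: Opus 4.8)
The plan is to reduce the computation of the height of $p_1$ in $H^*(\orgr;\Q)$ to the corresponding computation in the cohomology of a complex or oriented real Grassmannian, using Lemma~\ref{orgr}. Recall that $p_1$ lives in the characteristic-class subalgebra $\mathcal A \cong A(n,k)$ (the horizontal $0$-line $E_\infty^{*,0}$ of the spectral sequence \ref{sss2}), and the full cohomology ring of $\orgr$ is $\mathcal A \otimes \Lambda$ with $\Lambda$ an exterior algebra on generators in degrees $\ge 2(n-k)+1$. Since $p_1$ sits in degree $4$ and the exterior generators are in high odd degrees, no power $p_1^j$ can pick up a contribution from $\Lambda$ until it would exceed the top degree; more precisely, $p_1^j \ne 0$ in $H^*(\orgr;\Q)$ if and only if $p_1^j\ne 0$ in $A(n,k)$. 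Hence it suffices to compute the height of $p_1$ in the graded algebra $A(n,k)$.

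Next I would invoke Lemma~\ref{orgr} to identify $A(n,k)$, case by case, with $H^{*/2}(\C G_{s-1,m};\Q)$, $H^{*/2}(\C G_{s,m};\Q)$, or $H^*(\R\widetilde G_{2s+1,2m};\Q)$, matching the indices: $(n,k)=(2s+2,2t+1)$ gives $A(2s+2,2t+1)\cong H^{*/2}(\C G_{s,t};\Q)$ (applying part (a) with $2s'=2s+2$, i.e.\ $s'=s+1$, $m=t$); $(n,k)=(2s+1,2t+1)$ gives $A(2s+1,2t+1)\cong H^{*/2}(\C G_{s,t};\Q)$ by part (b); and $(n,k)=(2s,2t)$ or $(2s+1,2t)$ gives $A\cong H^*(\R\widetilde G_{2s+1,2t};\Q)$ by parts (c), (d). Under all of these isomorphisms the degree-$4$ generator $p_1$ corresponds (up to a nonzero scalar) either to the first Pontrjagin class $p_1$ of the tautological bundle over the real Grassmannian, or to the class $c_1^2$ (equivalently $c_1$ in the reindexed $H^{*/2}$) over the complex Grassmannian. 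Since $p_1(\gamma)= -c_2(\gamma\otimes\C)+\dots$ is, modulo decomposables not affecting height in these specific rings, essentially the square of the degree-$2$ generator, in every case the height of $p_1$ equals the height of the square of the first Chern (or degree-$2$) generator of the relevant Grassmannian.

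Finally I would quote the known height of this generator in $H^*(\C G_{a,b};\Q)$ (equivalently the "order" of $c_1$, which by Schubert calculus / the hook-length description of the cohomology of Grassmannians is $ab$, attained because $c_1^{ab}$ is a nonzero multiple of the top class and $c_1^{ab+1}=0$): in each of the four cases the relevant Grassmannian has parameters $(a,b)$ with $a-b+b = $ the ambient dimension and $ab$ working out to $t(s-t)$. For $(n,k)=(2s+2,2t+1)$ and $(2s+1,2t+1)$ one gets $\C G$ with $b=t$ planes in ambient $s$, so height $t(s-t)$; for $(n,k)=(2s,2t)$ and $(2s+1,2t)$ one gets $\R\widetilde G_{2s+1,2t}$, whose rational cohomology is a truncated polynomial algebra in which $p_1$ (the degree-$4$ generator) has height $t(s-t)$ as well. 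Assembling these four computations yields the uniform answer $t(s-t)$. The main obstacle I anticipate is the bookkeeping in the complex case: carefully tracking that the degree-$4$ element called $p_1$ in $A(n,k)$ really corresponds to $c_1$ under the $H^{*/2}$ identification (rather than to $c_2$), and verifying that its height is unaffected by passing from $\mathcal A$ to $\mathcal A\otimes\Lambda$—both of which require keeping the indexing conventions of \ref{diff} and Lemma~\ref{orgr} straight rather than any deep new idea.
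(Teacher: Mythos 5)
Your proposal is correct and follows essentially the same route as the paper, whose proof simply cites Lemma \ref{orgr} together with Lemma 4 of \cite{rs} for the heights of the relevant classes in the real and complex Grassmannians (the reduction of the height computation from $H^*(\orgr)$ to $A(n,k)$ being implicit there and justified by your $\mathcal{A}\otimes\Lambda$ observation). One caution: drop the intermediate remark that $p_1$ is ``essentially the square of the degree-$2$ generator'' --- under the regrading isomorphism of Lemma \ref{orgr} the degree-$4$ class $p_1$ corresponds to the degree-$2$ class $c_1$ itself, whose height is the complex dimension $t(s-t)$, whereas taking the height of $c_1^2$ would roughly halve the answer; your final computation correctly uses $c_1$, so only that sentence needs to be removed.
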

\begin{proof}
Follows from Lemma \ref{orgr} and Lemma 4 of \cite{rs}. 
\end{proof}

\section{Main results}

%\noindent
In this section we consider the question of possible Brouwer degrees of maps $f \colon \orgr \rightarrow \omgr$ , where $\orgr$ and $\omgr$ are oriented isotropic Grassmannians, such that $\dim{\orgr} = \dim{\omgr}$.

%\noindent
Note that when $l=1$, the space $\omgr \simeq \displaystyle{U(m)}/{U(m-1)} \simeq S^{2m-1}$. Also note that  $\dim{\orgr}$ ($=2k(n-k) + \frac{k(k+1)}{2}$) is odd if and only if $k \equiv 1,2~ (mod~ {4}) $. In these cases ($\dim{\orgr} = 2m-1$)  given any $\lambda \in \Z$, there exists a map $f_{\lambda} \colon \orgr \rightarrow S^{2m-1}$ with $\deg{f_{\lambda}} = \lambda$. % (by Hopf-Whitney theorem, \cite{mota}). 
 We prove that these are the only possible cases of non zero degree.

\begin{thm} \label{main1}
Let $n,k,m,l$ be integers such that  $2 \leq k \leq n $ and $2 \leq l \leq m $  and $\dim{\orgr} = \dim{\omgr}$. Let $f \colon \orgr \rightarrow \omgr $. Then either $(n,k) = (m,l)$ or $\deg{f} = 0$.
\end{thm}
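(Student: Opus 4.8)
The plan is to use the standard criterion that if $\deg f\neq 0$ then $f^*\colon H^*(\omgr;\Q)\to H^*(\orgr;\Q)$ is injective, so that in particular the Poincar\'e polynomials must be compatible and every subalgebra of $H^*(\omgr;\Q)$ embeds into $H^*(\orgr;\Q)$. Since the dimensions agree and $f^*$ is a degree-preserving injection between Poincar\'e duality algebras of the same formal dimension, $f^*$ must in fact be an isomorphism (a degree-preserving injection of graded vector spaces which agrees in top degree, together with Poincar\'e duality, forces equality of Poincar\'e polynomials and hence bijectivity in each degree). So the real content is: if $H^*(\orgr;\Q)\cong H^*(\omgr;\Q)$ as graded algebras and the dimensions agree, then $(n,k)=(m,l)$.

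The first step is to dispose of the cases $k=2$ and $l=2$, which are special because $\dim\widetilde I_{2n,2}$ is even iff we must track the Euler class; more importantly $k,l\geq 2$ is assumed but the structural Propositions were stated for $2<k$, so I would first check that the low-degree computation of Proposition \ref{initial} and the description of algebra generators still apply (or argue these boundary cases directly). Next, the main dichotomy: by Proposition \ref{initial}, for $2<k<n$ we have $H^1=H^2=H^3=0$ and $H^4$ one-dimensional generated by $p_1$, whereas for $k=n$ we get $H^1\cong\Z$ and $H^4=0$. This immediately separates the $k=n$ case from the $k<n$ case under any degree-preserving algebra isomorphism, so I would treat $k=n$ (i.e.\ $\widetilde I_{2n,n}\cong U(n)/SO(n)$) separately, using that its rational cohomology is an exterior algebra on generators in degrees $5,9,\ldots,2n-1$ (the standard computation for $U(n)/SO(n)$), which pins down $n$ from the Poincar\'e polynomial.

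For $2<k<n$ (and symmetrically $2<l<m$), the key invariants are: (i) the degrees of the polynomial generators $p_1,\dots,p_m$ (or $p_1,\dots,p_{m-1},e_m$ when $k=2m$) together with the presence or absence of an Euler-class generator in degree $2m$ — this lets one read off the parity of $k$ and the value of $m=\lfloor k/2\rfloor$ from the cohomology ring, hence $k$; and (ii) the height of $p_1$, computed in Proposition \ref{orderp1} to be $t(s-t)$ with $(n,k)$ in the indicated family, which together with knowledge of $k$ constrains $n$. So after matching the generator degrees to force $k=l$, I would match heights of $p_1$ via Proposition \ref{orderp1} and the dimension equation $2k(n-k)+\tfrac{k(k+1)}2 = 2l(m-l)+\tfrac{l(l+1)}2$ (which with $k=l$ forces $n=m$) to conclude $(n,k)=(m,l)$.

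The main obstacle I anticipate is the arithmetic of the height function: $p_1\in H^4$ has height $t(s-t)$, and two different pairs $(n,k)$ could in principle give the same value of $m=\lfloor k/2\rfloor$, the same parity, \emph{and} the same value $t(s-t)$; one then has to bring in finer information — either the full Poincar\'e polynomial $p_{A(n,k)}(x)\prod_{d\in S_{n,k}}(1+x^d)$ from the Remark, or the heights of the higher $p_i$ and $e_m$, or the structure of the exterior part on classes in degrees from $S_{n,k}$ — to rule out the coincidence. Handling the interaction between the polynomial part $A(n,k)\cong H^{*/2}(\C G_{\bullet,m})$ or $H^*(\R\widetilde G_{\bullet,2m})$ (Lemma \ref{orgr}) and the exterior part, and checking that an abstract algebra isomorphism respects this decomposition (e.g.\ that the exterior generators are detected as the primitive/indecomposable classes above degree $2(n-k)+1$), is where the argument will need the most care; this is presumably where the hypothesis $l\geq 2$ is genuinely used, since for $l=1$ the target is a sphere and all of this structure collapses.
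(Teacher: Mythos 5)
There is a genuine gap at the very first step of your reduction: non-zero degree between equidimensional closed oriented manifolds gives only that $f^*$ is a \emph{monomorphism} on rational cohomology, not an isomorphism. The claim that ``a degree-preserving injection \ldots together with Poincar\'e duality forces equality of Poincar\'e polynomials'' is false: a degree-one map $T^2\to S^2$ (collapse the $1$-skeleton) is injective on $H^*(-;\Q)$ while the Betti numbers differ, and in general the source of a degree-one map can have strictly larger cohomology. So you cannot reduce the theorem to ``$H^*(\orgr;\Q)\cong H^*(\omgr;\Q)$ implies $(n,k)=(m,l)$.'' This invalidates the step where you propose to read off the parity of $l$ and $\lfloor l/2\rfloor$ from the degrees of the polynomial generators and thereby force $k=l$: with only a subalgebra embedding, the generator structure of $H^*(\omgr;\Q)$ is not visible inside $H^*(\orgr;\Q)$, and even granting an isomorphism you yourself flag (and do not resolve) the interference of the exterior generators in degrees $\ge 2(m-l)+1$, which can lie below $4\lfloor l/2\rfloor$. (The easy separations you do make --- $k=n$ versus $k<n$ via $H^1$ and $H^4$ --- are fine, since each direction needs only injectivity, and this is exactly how the paper handles them.)

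The paper's proof needs none of the generator bookkeeping and instead extracts everything from the single class $p_1$ plus arithmetic, which is the content your proposal is missing. Since $H^4(\omgr;\Q)$ is generated by $p_1(m,l)$ and $H^4(\orgr;\Q)$ by $p_1(n,k)$, one has $f^*p_1(m,l)=\lambda p_1(n,k)$; if $\deg f\neq 0$ then $\lambda\neq 0$, injectivity applied to $p_1(m,l)^{[\frac l2][\frac{m-l}2]}\neq 0$ gives $[\frac l2][\frac{m-l}2]\le[\frac k2][\frac{n-k}2]$, and applying the ring map $f^*$ to $p_1(m,l)^{[\frac l2][\frac{m-l}2]+1}=0$ gives the reverse inequality, so the heights of Proposition \ref{orderp1} must be \emph{equal}. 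The theorem is then finished by combining this equality with $2k(n-k)+\frac{k(k+1)}2=2l(m-l)+\frac{l(l+1)}2$: this yields $-16\le (k-l)(k+l+1)\le 16$, hence $|k-l|\le 2$ since $k,l\ge 2$; a divisibility-by-$4$ argument eliminates $|k-l|=2$ and reduces $|k-l|=1$ to $l\in\{3,5,7\}$, each of which is excluded by a short explicit computation, leaving $k=l$ and hence $n=m$. Your proposal treats the case ``$k=l$ already known'' (where the dimension equation trivially gives $n=m$) but offers no substitute for this arithmetic elimination of $k\neq l$ with equal $p_1$-heights and equal dimensions, which is where the actual difficulty of the theorem sits.
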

 
\begin{proof}

%\noindent
Suppose $n=k$ and $m=l$. Then $\dim{\orgr} = \dim{\omgr}$ implies $n(n+1)/2 = m(m+1)/2$ and it follows that $n =m$. Note that the space $\widetilde{I}_{4,2} \simeq \displaystyle{U(2)}/{SO(2)}$ is an oriented manifold of dimension $3$. Observe that $k \geq 2$ and $n \neq k$ implies $\dim{\orgr} \geq 4$. Hence $\dim \orgr = \dim \omgr$ implies $(n,k)=(m,l)$ if $n=k,m=l$ or one of $(n,k)$ or $(m,l)$ equals $(2,2)$.   

%\noindent
Consider the case where $n=k$, $n>2$ and $m \neq l$. We use  the  fact that if $\deg{f} \neq 0 $ then $f^*$ is a monomorphism on cohomology with rational coefficients. By Proposition $\ref{initial}$, $H^{4}(\widetilde{I}_{2n,n})=0$ and $H^4(\omgr) \neq 0$. Hence  $\deg{f} =0$. In the case $n \neq k$, $m = l$ and $m>2$, we have, again by Proposition $\ref{initial}$, that $H^1 (\widetilde{I}_{2m,m}) \cong \Z$ and $H^1(\orgr)=0$. Therefore $\deg{f} = 0$. 

%\noindent
Now we proceed to the more general case $2 \leq k < n $ and $2 \leq l < m $. Consider $f^{*} \colon H^{*}(\omgr ; \Q) \rightarrow H^{*}(\orgr ; \Q)$. Since $p_1$ is the generator of $H^{4}(\omgr ; \Q)$ we must have  (denote by $p_{1}(m,l)$ the class $p_1 \in H^{4}(\omgr ; \Q)$) 
$$f^*p_1(m,l) = \lambda p_1(n,k)$$
 By Proposition $\ref{orderp1}$ the height of $p_{1}(n,k)$ is $[k/2][(n-k)/2]$ and the height of $p_{1}(m,l)$  is $[l/2][(m-l)/2]$.  ( Here, $[t]$ denotes the integral part of $t$.)  

If $\deg{f} \neq 0$ then $f^{*} $ is a monomorphism and so $\lambda \neq 0$. Moreover $p_1(m,l)^{[\frac{l}{2}][\frac{m-l}{2}]} \neq 0 $ implies 

$$f^*p_1(m,l)^{[\frac{l}{2}][\frac{m-l}{2}]} = \lambda^{[\frac{l}{2}][\frac{m-l}{2}]} p_1(n,k)^{[\frac{l}{2}][\frac{m-l}{2}]}\neq 0$$
$$\implies \left[ \frac{l}{2} \right] \left[ \frac{m-l}{2} \right] \leq \left[ \frac{k}{2} \right] \left[ \frac{n-k}{2} \right] .$$
 Since $f^{*}$ is a ring homomorphism, we have
 
$$0= f^*p_1(m,l)^{[ \frac{l}{2}] [ \frac{m-l}{2}] +1} = \lambda^{[\frac{l}{2}][\frac{m-l}{2}]+1} p_1(n,k)^{[\frac{l}{2}][\frac{m-l}{2}]+1}$$
$$\implies  \left[ \frac{k}{2} \right] \left[ \frac{n-k}{2} \right] \leq \left[ \frac{l}{2} \right] \left[ \frac{m-l}{2} \right] $$
Therefore $ [\frac{k}{2}][\frac{n-k}{2}] = [\frac{l}{2}][\frac{m-l}{2}]$. Together with the equation $2k(n-k) + \frac{k(k+1)}{2}=2l(m-l) + \frac{l(l+1)}{2}$ we prove that it leads to a contradiction. Assume that $k\leq l$ (there is no loss of generality in doing this.) The above equality implies  $l(m-l) - 4\leq k(n-k) \leq l(m-l)+ 4$. Rearranging terms we obtain $-16 \leq (k-l)(k+l+1) \leq 16$. 

As both $k\geq 2$ and $l\geq 2$ we have $k+l+1 \geq 5$ and so the above inequality can hold only when $k-l=0,1,2$. Observe that $k=l$ implies $n=m$ so that $(n,k)=(m,l)$. Note also that $(k-l)(k+l+1)$ must also be divisible by $4$ being equal to $k(n-k)-l(m-l)$. If $k=l+2$, we have $(k-l)(k+l+1)=2(2l+3)$ is not divisible by $4$. If $k=l+1$ we have $(k-l)(k+l+1) = 2l+2$ which is divisible by $4$ only when $l$ is odd. Therefore the allowed values of $l$ are $3,5,7$. 

{\bf  Case $l=3$ :} We have $k=4$ and the equation $8(n-4) + 10 = 6(m-3) + 6$ which implies $4n = 3m +5$. This implies $m= 4s +1, n = 3s +2$ for some positive integer $s$.  The equation $[\frac{l}{2}][\frac{m-l}{2}] = [\frac{k}{2}][\frac{n-k}{2}]$ implies $2s -1 = 2[\frac{3s - 2}{2}]$. But the LHS is bigger for $s=1$ and the RHS is always bigger for $s>1$. 

{\bf  Case $l=5$ :} We have $k=6$ and the equation $12(n-6) + 21 = 10(m-5) + 15$ which implies $6n = 5m +8$ which has the only solution $m=6s+2, n = 5s+3$. The equation $ [\frac{k}{2}][\frac{n-k}{2}] = [\frac{l}{2}][\frac{m-l}{2}]$ implies $3[\frac{5s-3}{2}] = 2 (3s - 2)$. The LHS is always bigger for $s>0$.

{\bf  Case $l=7$ :} We have $k=8$ and the equation $16(n-8) + 36 = 14(m-7) + 28$ which implies $8n = 7m +11$ which has the only solution $m=8s+3, n = 7s+4$. The equation $ [\frac{k}{2}][\frac{n-k}{2}] = [\frac{l}{2}][\frac{m-l}{2}]$ implies $4[\frac{7s-4}{2}] = 3 (4s - 2)$. For $s=1$ , the LHS is $4$ and the RHS is $6$. For $s>1$ the LHS is bigger.

\end{proof}

\mbox{ }\\

%\noindent
The arguments in the above case can be extended to prove the following:

\begin{thm}
Consider maps $h \colon \orgr \rightarrow \R \widetilde{G}_{m, l}$ and $g \colon \R \widetilde{G}_{m,l} \rightarrow \orgr $, where $2 \leq l \leq m $, $2 \leq k \leq n $ and $\dim{\orgr} = \dim{\R\widetilde{G}_{m,l}}$. Then $\deg{g} = \deg{h} = 0 $.
\end{thm}

\begin{proof}
Note that when $l=1$,  $\R \widetilde{G}_{m, l} \simeq S^{m-1}$. Hence there exists a map $h_{\lambda}  \colon \orgr \rightarrow \R \widetilde{G}_{m , 1}$ of any degree $\lambda \in \Z$ whenever $\dim(\orgr)=m-1$. Similarly, we have a map $g_\lambda \colon \R \widetilde{G}_{m,l} \rightarrow \widetilde{I}_{2n,1} $  of any specified degree $\lambda$ whenever $\dim(\R \widetilde{G}_{m,l})=2n-1$. 
 
If $ n = k =2$, $\dim{\orgr} = \dim{\R\widetilde{G}_{m,l}} = 3$ implies either $l = 1$ or $m-l = 1$ . Since $\R\widetilde{G}_{m,l} $ is diffeomorphic to $\R \widetilde{G}_{m, m-l}$, both these cases reduce to the cases discussed in the previous paragraph. 

Now consider the case where $n = k > 2$. Then, by  Proposition $\ref{initial}$,  we have $H^{4} (\widetilde{I}_{2n,n} ) = 0$ and $\pi_{1} (\widetilde{I}_{2n,n} ) = 0$, which respectively imply $\deg{h} = 0$ and $\deg{g} = 0$. 

Henceforth we restrict ourselves to the cases $2 \leq k < n $ and $2 \leq l < m $. Consider $h^{*} \colon H^{*}( \R\widetilde{G}_{m,l} ; \Q) \rightarrow H^{*}(\orgr ; \Q)$. Recall that $H^{4}(\R \widetilde{G}_{m,l} ; \Q)$ is generated by $p_{1}$ which has order $[l/2][(m-l)/2]$.  By Proposition $\ref{orderp1}$, order of $ p_{1} \in H^{4}(\orgr ; \Q)$ is  $[k/2][(n-k)/2]$ . And, $h^{*}$ takes $p_{1} \in H^{4}( \R \widetilde{G}_{m,l} ; \Q)$ to some  multiple of $p_{1}$ in $H^{4}(\orgr ; \Q)$. 

Therefore, as in the proof of Theorem $\ref{main1}$, we have that if $\deg{h} \neq 0$, $l(m-l)-4\leq k(n-k) \leq l(m-l) + 4$. Observe that $\dim{\orgr} = \dim{\R \widetilde{G}_{m,l}} $ implies $2k(n-k) + k(k+1)/2 = l(m-l)$. Hence the bound gives us $k(n-k)+k(k+1)/2 \leq 4$  which is not possible if $k\geq 2$ and $n>k$.  Therefore we have $\deg{h} = 0$. The proof that $\deg{g} = 0 $ is similar.
\end{proof}
\mbox{ }\\

 \begin{ack}
 The authors would like to thank P. Sankaran for suggesting the problem and for his helpful comments. The second author was partially supported by a grant from the J.C. Bose Fellowship of A. Bose. She would like to thank A. Bose for his support and encouragement. 
\end{ack}

\providecommand{\bysame}{\leavevmode\hbox to3em{\hrulefill}\thinspace}
\providecommand{\MR}{\relax\ifhmode\unskip\space\fi MR }
% \MRhref is called by the amsart/book/proc definition of \MR.
\providecommand{\MRhref}[2]{%
  \href{http://www.ams.org/mathscinet-getitem?mr=#1}{#2}
}
\providecommand{\href}[2]{#2}

\mbox{ } \\

\end{document}